\newcommand{\Q}{\mathbb{Q}}
\newcommand{\F}{\mathbb{F}}
\newcommand{\Fq}{\F_{\!q}}
\newcommand{\Fp}{\F_{\!p}}
\newtheorem{thm}{Theorem}[section]
\newtheorem{lem}[thm]{Lemma}
\theoremstyle{definition}
\newtheorem{ex}{Example}[section]
\newtheorem{rem}{Remark}[section]
\begin{document}
\title{On quadratic character sums over quartics}

\begin{abstract}
We obtain transformation formulas for quadratic character sums with quartic and cubic polynomial arguments. 

\end{abstract}
\keywords{Character sums. Elliptic curves over finite fields.}
\subjclass[2010]{Primary: 11L10. Secondary: 11G05.}

\date{\today}
\author{Bogdan Nica}

\address{\newline Department of Mathematical Sciences \newline Indiana University Indianapolis}
\email{bnica@iu.edu}

\maketitle

\section{Introduction}
Let $\sigma$ denote the quadratic character on $\Fq$, the finite field with $q$ elements. Throughout, we assume that $\mathrm{char} \: \Fq\neq 2,3$. This note is concerned with quadratic character sums of the form
\begin{align}\label{eq0}
\sum_{x\in \Fq} \sigma(f(x)).\tag{$S_f$}
\end{align}
Here $f(x)$ is a monic polynomial with coefficients in $\Fq$, usually required to be square-free. The quadratic character sum \eqref{eq0} is intimately related to the number of $\Fq$-points on the curve $y^2=f(x)$:
\[\#\big\{(x,y)\in \Fq\times \Fq: y^2=f(x)\big\}=q+\sum_{x\in \Fq} \sigma(f(x)).\]

The evaluation of the quadratic character sum \eqref{eq0} is easily settled in the case when the monic polynomial $f(x)$ is linear or quadratic:
\begin{align}
\sum_{x\in \Fq} \sigma(f(x))=\begin{cases}
0 & \textrm{ if } \deg f=1,\\
-1 & \textrm{ if } \deg f=2.
\end{cases}
\end{align}

When $f(x)$ has degree $3$ or higher, explicit evaluations are known only for sporadic families of polynomials. Still, there are several viewpoints that afford considerable insight. How does a quadratic character sum \eqref{eq0} \emph{grow}, in terms of the degree of $f$? This is the search for estimates, a celebrated example being the Hasse--Weil bound. How does a quadratic character sum \eqref{eq0} \emph{vary}, as we change (some of) the coefficients for $f$? This is the statistical viewpoint; we owe to Birch \cite{B} an early work in this rich vein, but see also \cite{N2}. How do quadratic character sums of the form \eqref{eq0} \emph{relate} to each other? This is the pursuit of transformation formulas--namely, identities between quadratic character sums associated to two different polynomials. Skillful usage of such transformation formulas often leads to new explicit evaluations of quadratic character sums.

An illustrative example of a transformation formula for quadratic character sums is the following: for all $b,c\in \Fq$ we have
\begin{align}\label{eq: jac}
\sum_{x\in \Fq} \sigma(x)\sigma(x^2+bx+c)=\sum_{x\in \Fq} \sigma(x+b)\sigma(x^2-4c).
\end{align}
This cubic-to-cubic transformation formula was first obtained by Jacobsthal \cite{Jac}, and rediscovered in \cite[Thm.1]{PAR}. Compare also the prime case of \cite[Thm.1.2]{KPSV}.

Another example is the following transformation formula: for all $a\in \Fq$ we have
\begin{align}\label{eq: lm}
\sum_{x\in \Fq} \sigma(x^5+ax^3+x)=\big(1+\sigma(-1)\big)\sum_{x\in \Fq} \sigma\big(x^3+4x^2+(a+2)x\big).
\end{align}
This is due to Lepr\'evost and Morain \cite[Thm.1]{LM}. See \cite[Exer.5.29, Thm.5.20]{N} for generalizations. Compare also results in \cite[Secs.4,5]{KPSV}. The transformation formula \eqref{eq: lm} is a descent formula: it lowers the degree of the polynomial argument--in some sense, its complexity--from quintic, on the left-hand side, to cubic, on the right-hand side.

\section{Transformation formulas over quartics}
In this note, we are interested in transformation formulas for quadratic character sums of the form \eqref{eq0}, in which $f(x)$ is a quartic polynomial. 

A simple, yet useful example is the following descent formula for biquadratics: if $b,c\in \Fq$ satisfy $b^2\neq 4c$ then
\begin{align}\label{eq: biq}
\sum_{x\in \Fq}\sigma(x^4+bx^2+c)=-1+\sum_{x\in \Fq}\sigma(x^3+bx^2+cx).
\end{align}
This is an instance of a more general descent principle: for any polynomial $g(x)$ we have
\begin{align*}
\sum_{x\in \Fq}\sigma(g(x^2))=\sum_{x\in \Fq}(1+\sigma(x))\sigma(g(x))=\sum_{x\in \Fq}\sigma(g(x))+\sum_{x\in \Fq}\sigma(xg(x)).
\end{align*}
Taking $g(x)=x^2+bx+c$, in which case $\sum_{x\in \Fq}\sigma(g(x))=-1$, we get \eqref{eq: biq}. 

The next descent formula was obtained, in a slightly different form, by Williams \cite{W79}; see also \cite[Thm.5.10]{N}. Compared to \eqref{eq: biq}, its algebraic origin is harder to detect.

\begin{thm}\label{thm: desc2}
Let $f(x)=(x^2+b_1x+c_1)(x^2+b_2x+c_2)$ be square-free. Then
\begin{align}\label{eq: desc2intro}
\sum_{x\in \Fq} \sigma(f(x))=-1+\sum_{x\in \Fq} \sigma\big(x^3+Bx^2+\Delta_1\Delta_2x\big)
\end{align}
where $\Delta_1=b_1^2-4c_1$, $\Delta_2=b_2^2-4c_2$, and $B=4(c_1+c_2)-2b_1b_2$.
\end{thm}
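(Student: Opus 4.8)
The plan is to evaluate the left-hand side by fibering over the values of the rational function $R(x) = p(x)/q(x)$, where I write $p(x) = x^2 + b_1 x + c_1$ and $q(x) = x^2 + b_2 x + c_2$. The opening move is that $\sigma$ is multiplicative and $\sigma(q(x))^{\pm 1}$ agree whenever $q(x)\neq 0$, so $\sigma(f(x)) = \sigma(p(x))\sigma(q(x)) = \sigma(p(x))\sigma(q(x))^{-1} = \sigma(R(x))$ for every $x$ with $q(x)\neq 0$, while the roots of $q$ contribute $\sigma(0)=0$. Hence $\sum_x \sigma(f(x)) = \sum_{t\in\Fq} \sigma(t)\,\#\{x : R(x) = t\}$. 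Here square-freeness is used: it forces $\Delta_1,\Delta_2\neq 0$ and $\gcd(p,q)=1$, and the coprimality guarantees that every solution of $R(x)=t$ automatically satisfies $q(x)\neq 0$, so the fibers genuinely partition $\{x:q(x)\neq 0\}$.

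Next I would compute the fiber sizes. For $t\neq 1$ the equation $R(x)=t$ is the honest quadratic $(1-t)x^2 + (b_1 - tb_2)x + (c_1 - tc_2)=0$, and the crux of the whole argument is to expand its discriminant and recognize it as exactly $h(t) := \Delta_2 t^2 + B t + \Delta_1$, with the very same $B$ as in the statement. Using that a quadratic with nonzero leading term has $1 + \sigma(\mathrm{disc})$ roots (valid in all three cases: double root, two roots, no roots), this gives $\#\{x:R(x)=t\} = 1 + \sigma(h(t))$ for $t\neq 1$. The value $t=1$ is exceptional, being the value of $R$ at infinity: there $R(x)=1$ collapses to the linear equation $(b_1-b_2)x + (c_1-c_2)=0$, so its fiber has size $1$ if $b_1\neq b_2$ and size $0$ if $b_1=b_2$; since $h(1)=(b_1-b_2)^2$, in either case this fiber size equals $\sigma(h(1))$.

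Substituting the fiber counts, I would obtain
\[ \sum_x \sigma(f(x)) = \sum_{t\neq 1}\sigma(t) + \sum_{t}\sigma\big(t\,h(t)\big) - \sigma(h(1)) + \#\{x:R(x)=1\}. \]
Now $\sum_{t\neq 1}\sigma(t) = -1$ because $\sum_t \sigma(t)=0$, and the last two terms cancel exactly, by the fiber computation at $t=1$. It then remains to match $\sum_t \sigma(t\,h(t)) = \sum_t \sigma(\Delta_2 t^3 + Bt^2 + \Delta_1 t)$ with the target cubic sum: the substitution $t\mapsto t/\Delta_2$, a bijection of $\Fq$ since $\Delta_2\neq 0$, scales the argument by $1/\Delta_2^2$, which is a nonzero square, and therefore produces precisely $\sum_t \sigma(t^3 + Bt^2 + \Delta_1\Delta_2\,t)$. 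Collecting the pieces yields the claimed identity.

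The individual steps are routine once the framework is in place; the only genuinely clever point — and the main obstacle — is the very first idea of passing to $R=p/q$ and noticing that the fiber discriminant is exactly $\Delta_2 t^2 + Bt + \Delta_1$, which is what makes $B$ and $\Delta_1\Delta_2$ appear on their own. The secondary delicate point is the bookkeeping at the exceptional value $t=1$, which is the source of the additive constant $-1$ and must be matched against the fiber size there so that the leftover terms cancel cleanly. One could instead recast the same computation geometrically, as a birational transformation of $y^2=f(x)$ into the biquadratic $z^2=\Delta_2 w^4 + Bw^2 + \Delta_1$ followed by the descent \eqref{eq: biq}, but the direct fibering above is more transparent and self-contained.
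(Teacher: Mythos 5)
Your proof is correct, and it takes a genuinely different route from the paper. The paper proves Theorem~\ref{thm: desc2} via the master formula (Theorem~\ref{thm: TF}): choosing the coefficient matrix with rows $(1,b_1,c_1)$, $(0,0,0)$, $(-1,-b_2,-c_2)$ -- i.e.\ double-counting solutions of $u^2(x^2+b_1x+c_1)=x^2+b_2x+c_2$ -- it first obtains the quartic-to-quartic identity \eqref{eq: a31} with the biquadratic $\Delta_1x^4+Bx^2+\Delta_2$ on the right, then applies the biquadratic descent \eqref{eq: biq} and the rescaling $x:=x/\Delta_1$ to reach the cubic. That route forces the paper to verify that square-freeness of $f$ amounts to $\Delta_1,\Delta_2\neq 0$ and $B^2\neq 4\Delta_1\Delta_2$ (a discriminant computation via $\Delta(f)=\Delta_1\Delta_2\Pi^2$, $16\Pi=B^2-4\Delta_1\Delta_2$), since \eqref{eq: biq} requires $B^2 \neq 4\Delta_1\Delta_2$. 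Your fibering of $\sum_x\sigma\big(p(x)/q(x)\big)$ over the values $t$, with fiber count $1+\sigma(h(t))$ where $h(t)=\Delta_2t^2+Bt+\Delta_1$, merges the paper's three steps \eqref{eq: a31}, \eqref{eq: a32}, \eqref{eq: a33} into one computation: your weight $\sigma(t)$ plays exactly the role of the descent \eqref{eq: biq}, and your discriminant identity is the same algebra as the paper's $\delta_2(x)^2-4\delta_1(x)\delta_3(x)=\Delta_1x^4+Bx^2+\Delta_2$ (substitute $t=1/x^2$ and clear denominators). What your version buys: it is self-contained (no master formula, no appeal to \eqref{eq: biq}), it only needs the cheap consequences $\Delta_2\neq 0$ and $\gcd(p,q)=1$ of square-freeness rather than the full discriminant analysis, and your observation that the exceptional fiber at $t=1$ has size exactly $\sigma(h(1))=\sigma\big((b_1-b_2)^2\big)$ makes the bookkeeping cancel cleanly -- note only that ``size $0$ when $b_1=b_2$'' tacitly uses $c_1\neq c_2$, which does follow from coprimality. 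What the paper's route buys is uniformity: the same master formula yields Theorems~\ref{thm: desc} and~\ref{thm: qq} as well, whereas your argument is tailored to the product-of-quadratics shape.
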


There is, in fact, a general quartic-to-cubic descent formula. It reads as follows.

\begin{thm}\label{thm: desc}
Let $f(x)=x^4+a_3x^3+a_2x^2+a_1x+a_0$ be square-free. Then
\begin{align}\label{eq: gendescintro}
\sum_{x\in \Fq} \sigma(f(x))=-1+\sum_{x\in \Fq} \sigma(g(x))
\end{align}
where $g(x)=x^3+a_2x^2+(a_1a_3-4a_0)x+a_0(a_3^2-4a_2)+a_1^2$.
\end{thm}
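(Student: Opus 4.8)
The plan is to realize the identity as the classical reduction of the genus-one curve $y^2 = f(x)$ to Weierstrass form, performed at the level of affine point counts so that no actual geometry intervenes. First I would complete the square at infinity: as $f$ is monic, write $f(x) = P(x)^2 + \ell(x)$ with $P(x) = x^2 + \tfrac{1}{2}a_3 x + t$, taking $t = \tfrac{1}{2}a_2 - \tfrac{1}{8}a_3^2$ so that $f(x) - P(x)^2$ has degree at most one. Then $\ell(x) = \alpha x + \beta$ with $\alpha = a_1 - a_3 t$ and $\beta = a_0 - t^2$, and $\ell \not\equiv 0$ because otherwise $f = P^2$ would not be square-free.

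The crux is the substitution $w = y - P(x)$ on the affine curve $C\colon y^2 = f(x)$. Rewriting $y^2 - P(x)^2 = \ell(x)$ as $\big(y - P(x)\big)\big(y + P(x)\big) = \ell(x)$ and eliminating $y$, I find that each $w$ produces a quadratic in $x$,
\[
2w\,x^2 + (a_3 w - \alpha)\,x + \big(w^2 + 2tw - \beta\big) = 0,
\]
whose discriminant is the cubic $D(w) = -8w^3 + (a_3^2 - 16t)w^2 + (8\beta - 2a_3\alpha)w + \alpha^2$. For $w \neq 0$ the assignment $(x,y) \mapsto (w,x)$ is a bijection between $\{(x,y) \in C : y \neq P(x)\}$ and the pairs $(w,x)$ with $w \neq 0$ solving this quadratic, and for each fixed such $w$ the number of admissible $x$ equals $1 + \sigma\big(D(w)\big)$.

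I would then evaluate $\#C(\Fq) = q + \sum_x \sigma(f(x))$ by splitting the affine count according to $w = 0$ versus $w \neq 0$. Summing $1 + \sigma\big(D(w)\big)$ over $w \neq 0$ and adding the fiber above $w = 0$ should collapse, after a short computation, to $\sum_x \sigma(f(x)) = -1 + \sum_w \sigma\big(D(w)\big)$. The step I expect to be the main nuisance is precisely this bookkeeping: the fiber over $w = 0$ and the boundary term $\sigma(D(0)) = \sigma(\alpha^2)$ are counted differently according to whether $\ell$ is genuinely linear ($\alpha \neq 0$) or constant ($\alpha = 0$), so both cases must be verified, and reassuringly both deliver the same identity.

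Finally I would normalize $D$ to the advertised cubic $g$. The reindexing $w \mapsto -\tfrac{1}{2}w$ leaves $\sum_w \sigma\big(D(w)\big)$ unchanged and renders $D$ monic, its leading coefficient $-8$ being exactly undone by $(-\tfrac{1}{2})^3$; a subsequent translation of the variable by $a_2 - \tfrac{1}{4}a_3^2$, harmless for the character sum, should carry the outcome onto $g(x) = x^3 + a_2 x^2 + (a_1 a_3 - 4a_0)x + a_0(a_3^2 - 4a_2) + a_1^2$. Confirming this final coincidence is a direct, if somewhat tedious, matter of substituting $t$, $\alpha$, $\beta$ in terms of the $a_i$ and expanding.
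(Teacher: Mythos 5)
Your proposal is correct and, in substance, it is the paper's own argument: the substitution $w=y-P(x)$ converts $y^2=f(x)$ into the equation $w^2+2P(x)w-\ell(x)=0$, quadratic in each of $w$ and $x$, and your two-way solution count (always $1+\sigma(4f(x))$ solutions in $w$ for fixed $x$, versus $1+\sigma(D(w))$ solutions in $x$ for $w\neq 0$ with the fiber over $w=0$ handled separately in the two cases $\alpha\neq 0$ and $\alpha=0$) is precisely the paper's master formula, Theorem~\ref{thm: TF}, instantiated at the coefficient matrix with row polynomials $1$, $2P(x)$, $-\ell(x)$, where here $n_{\alpha}=0$, $n_{\delta_1}=1$, and the common-zero counts vanish for the same elementary reasons you give. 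The only divergence is one of normalization: the paper completes the square without the constant term $t$ (its matrix gives $\alpha(x)=-1/4$, $\beta(x)=x^2+(a_3/2)x$), so its down-polynomial discriminant is $g(x)$ on the nose, whereas your full completion makes the remainder $\ell$ linear at the cost of the concluding rescaling $w\mapsto -w/2$ and translation by $a_2-a_3^2/4$, which does indeed carry $D$ onto $g$ as claimed.
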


Despite its generality, the descent formula \eqref{eq: gendescintro} does not supplant other quartic-to-cubic descent formulas such as \eqref{eq: biq} or \eqref{eq: desc2intro}. For neither one of \eqref{eq: biq} or \eqref{eq: desc2intro} is an instance of \eqref{eq: gendescintro}, as one might naively hope; and while they can be eventually derived from \eqref{eq: gendescintro}, additional ingredients are needed. It turns out that Jacobsthal's cubic-to-cubic formula \eqref{eq: jac} gets used in deriving both \eqref{eq: biq} from \eqref{eq: gendescintro}, as well as \eqref{eq: desc2intro} from \eqref{eq: gendescintro}. The latter derivation, \eqref{eq: desc2intro} from \eqref{eq: gendescintro}, is spelled out in Remark~\ref{rem}.

We have first arrived at Theorem~\ref{thm: desc} by interpreting, in the language of quadratic character sums, a well-known fact from the theory of elliptic curves--that a quartic curve $y^2=f(x)$ can be turned into a cubic curve $y^2=g(x)$ by means of a rational change of variables over the coefficient field. This fact is seemingly due to Mordell \cite[p.77]{Mor}. Here is a convenient version of Mordell's procedure, after Nagao \cite[p.153]{Nag}: the curve $y^2=x^4+a_3x^3+a_2x^2+a_1x+a_0$ turns into $Y^2=X^3+a_2X^2+(a_1a_3-4a_0)X+ a_0(a_3^2-4a_2)+a_1^2$ under the change of variables
\[x:=\frac{2(Y-a_1)-a_3X}{4(X+a_2)-a_3^2}, \qquad y:=\frac{2x^2+a_3x-X}{2}.\]
There is a visible issue at $X=-a_2+a_3^2/4$, which ends up accounting for the $-1$ term in \eqref{eq: gendescintro}.

The search for a direct proof of Theorem~\ref{thm: desc}, which avoids elliptic curves and rational transformations, led us to a rather general transformation formula. The `master formula' requires some notational set-up so we will not state it here. Let us emphasize instead its flexibility as a unified tool. Generically, the master formula yields quartic-to-quartic transformation formulas, a sample result being the following.

\begin{thm}\label{thm: qq}
Let $a,b,c\in \Fq$ with $c\neq 0$. Then
\begin{align*}
\sum_{x\in \Fq} \sigma\big(x^4+(2a-4b)x^2-4cx&+a^2\big)=\sum_{x\in \Fq} \sigma\big(x^4+(2b-4a)x^2-4cx+b^2\big).
\end{align*}
\end{thm}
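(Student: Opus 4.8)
The plan is to reduce each quartic sum to a cubic sum via the general descent of Theorem~\ref{thm: desc}, and then compare the two resulting cubic sums. Write $f_a(x)=x^4+(2a-4b)x^2-4cx+a^2$ for the left-hand polynomial and $f_b(x)=x^4+(2b-4a)x^2-4cx+b^2$ for the right-hand one, so that $f_b$ is obtained from $f_a$ by interchanging $a\leftrightarrow b$. Applying Theorem~\ref{thm: desc} to each (assuming both are square-free) gives $\sum_{x\in\Fq}\sigma(f_a(x))=-1+\sum_{x\in\Fq}\sigma(g_a(x))$ and $\sum_{x\in\Fq}\sigma(f_b(x))=-1+\sum_{x\in\Fq}\sigma(g_b(x))$, where $g_a,g_b$ are the associated cubics. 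The two $-1$ terms cancel, so the asserted identity is equivalent to the single claim that $\sum_{x\in\Fq}\sigma(g_a(x))=\sum_{x\in\Fq}\sigma(g_b(x))$.

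Substituting $a_3=0$, $a_2=2a-4b$, $a_1=-4c$, $a_0=a^2$ into the formula of Theorem~\ref{thm: desc} yields
\[ g_a(x)=x^3+(2a-4b)x^2-4a^2x+\big(16a^2b-8a^3+16c^2\big), \]
and $g_b$ is the same expression with $a$ and $b$ interchanged. The heart of the proof is the observation that these two cubics are not merely isogenous or isomorphic curves but are \emph{translates} of one another. Matching the coefficients of $x^2$ in $g_a(x+\mu)$ and $g_b(x)$ forces $\mu=2(b-a)$, and with this value one checks the polynomial identity
\[ g_a\big(x+2(b-a)\big)=g_b(x). \]
Since $x\mapsto x+2(b-a)$ is a bijection of $\Fq$, reindexing gives $\sum_{x\in\Fq}\sigma(g_b(x))=\sum_{x\in\Fq}\sigma(g_a(x))$ at once, and the theorem follows.

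I expect two steps to demand the most care. The first, and the real content, is confirming the translation: once $\mu$ is pinned down by the $x^2$-coefficient, it is not a priori clear that the linear and constant coefficients also align, and verifying that they do—so that $g_a$ and $g_b$ are exact translates—is where the special shape $f_a(x)=(x^2+a)^2-4x(bx+c)$ of the quartic is quietly being used. The second is the square-free hypothesis required to invoke Theorem~\ref{thm: desc}: the translation step is unconditional, but the descent step is not, so the degenerate locus where $f_a$ or $f_b$ develops a repeated root must be treated separately, either by direct evaluation or by appealing to the master formula, which should apply uniformly. As a route robust to this technicality, I would also look for a direct rational change of variables carrying $y^2=f_a(x)$ to $y^2=f_b(x)$, obtained by transporting the cubic translation back through Mordell's procedure.
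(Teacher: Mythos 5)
Your route is genuinely different from the paper's, and its generic core is sound: the descended cubics are computed correctly, and the translation identity $g_a\big(x+2(b-a)\big)=g_b(x)$ does hold (I checked all three coefficient matchings, not just the $x^2$ one that pins down $\mu$). It is a nice observation that the two quartics share, up to translation, the same resolvent cubic. But there is a genuine gap that you flag without closing: Theorem~\ref{thm: desc} requires the quartic to be square-free, and the hypothesis $c\neq 0$ does not ensure this. The degenerate locus is nonempty: taking $a=0$, $b=3/4$, $c=-1/2$ gives
\begin{align*}
f_a(x)=x^4-3x^2+2x=x(x-1)^2(x+2),
\end{align*}
which satisfies the theorem's hypotheses yet has a repeated root, so your descent step is unavailable there. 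Moreover, since $g_a$ and $g_b$ are translates and the discriminant of a quartic agrees with that of its resolvent cubic, $f_a$ is square-free exactly when $f_b$ is—so the degeneracy strikes both sides simultaneously, and these cases cannot be dismissed; they must be proved, either by a direct evaluation (feasible here, since a non-square-free quartic factors with a repeated linear or quadratic factor and the sum collapses) or by a uniform argument.

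The paper's proof is exactly the uniform argument you gesture at in your last paragraph: it applies the master formula (Theorem~\ref{thm: TF}) to the coefficient matrix
\begin{align*}
\begin{pmatrix}
0 & 1 & 0\\
1 & 0 & a\\
0 & b & c
\end{pmatrix},
\end{align*}
whose row polynomials are $\alpha(x)=x$, $\beta(x)=x^2+a$, $\gamma(x)=bx+c$ and whose down polynomials are $\delta_1(x)=x$, $\delta_2(x)=x^2+b$, $\delta_3(x)=ax+c$. Then $n_\alpha=n_{\delta_1}=1$, and $c\neq 0$ directly forces $n_{\alpha,\beta,\gamma}=n_{\delta_1,\delta_2,\delta_3}=0$ (no appeal to square-freeness, hence no excluded locus), yielding
\begin{align*}
\sum_{x\in \Fq} \sigma\big((x^2+a)^2-4x(bx+c)\big)=\sum_{x\in \Fq} \sigma\big((x^2+b)^2-4x(ax+c)\big)
\end{align*}
in one stroke. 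This also explains structurally why your two resolvent cubics coincide up to translation: both quartics are the two discriminants of a single biquadratic form $F(u,x)$, read along rows and columns. In short: your argument is a correct and genuinely different proof of the generic case, but as written it proves a weaker statement (square-free case only); to match the theorem you must either carry out the degenerate-case analysis or replace the descent-plus-translation step by the master-formula computation above.
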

But we can also use the master formula in order to derive the quartic-to-cubic descent formulas of Theorem~\ref{thm: desc} and Theorem~\ref{thm: desc2}, and even cubic-to-cubic transformation formulas.

We obtain the master formula in Section~\ref{M}, and then we derive the transformation formulas in Section~\ref{A}. In Section~\ref{E} we discuss some concrete examples. 

\section{The master formula}\label{M}
The starting point is the idea that a transformation formula is an identity arising from double-counting. Consider a quartic form in two variables $u$ and $x$, with coefficients in $\Fq$, given by
\begin{align}
F(u,x)=  \begin{pmatrix} u^2 \\ u \\ 1\end{pmatrix}^T
\begin{pmatrix}
a_1 & a_2 &a_3\\
b_1 & b_2 &b_3\\
c_1 & c_2 &c_3
\end{pmatrix} \begin{pmatrix} x^2 \\ x \\ 1\end{pmatrix}.
\end{align}
The quartic $F(u,x)$ can be expanded in two ways: as a quadratic in $u$,
\begin{align}\label{eq: Finu}
F(u,x)=\alpha(x)u^2+\beta(x)u+\gamma(x)
\end{align}
respectively as a quadratic in $x$,
\begin{align}\label{eq: Finx}
F(u,x)=\delta_1(u)x^2+\delta_2(u)x+\delta_3(u).
\end{align}
Here
\begin{align}\label{pols}
\begin{cases}
\alpha(x)=a_1x^2+a_2x+a_3\\
\beta(x)=b_1x^2+b_2x+b_3\\
\gamma(x)=c_1x^2+c_2x+c_3\\
\end{cases},\qquad 
\begin{cases}
\delta_1(u)=a_1u^2+b_1u+c_1\\
\delta_2(u)=a_2u^2+b_2u+c_2\\
\delta_3(u)=a_3u^2+b_3u+c_3\\
\end{cases}.
\end{align}
Mnemonically, $\alpha$, $\beta$, $\gamma$ are the \emph{row polynomials}, while $\delta_1$, $\delta_2$, $\delta_3$ are the \emph{down polynomials}. The $3\times 3$ matrix underlying the quartic form $F(u,x)$ is referred to as the \emph{coefficient matrix}.

We wish to count, in two ways, the number of solutions $(u,x)\in \Fq\times \Fq$ to the equation $F(u,x)=0$. To do so, we need the following lemma.

\begin{lem} Let $f(x)$, $g(x)$, $h(x)$ be polynomials with coefficients in $\Fq$. Then the number of solutions $(x,y)\in \Fq\times \Fq$ to the equation
\begin{align}\label{eq: ly}
f(x)y^2+g(x)y+h(x)=0
\end{align}
is given by the formula
\begin{align}\label{eq: ny}
q\cdot (1+n_{f,g,h})-n_f+\sum_{x\in \Fq} \sigma\big(g(x)^2-4f(x)h(x)\big)
\end{align}
where $n_f=\#\{x\in \Fq: f(x)=0\}$ is the number of zeros of $f(x)$, and $n_{f,g,h}=\#\{x\in \Fq: f(x)=g(x)=h(x)=0\}$ is the number of common zeros of $f(x)$, $g(x)$, $h(x)$.
\end{lem}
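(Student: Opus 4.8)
The plan is to compute the count fiber by fiber: I would fix $x\in\Fq$, determine the number of $y\in\Fq$ solving \eqref{eq: ly} for that $x$, and then sum over $x$. For fixed $x$, equation \eqref{eq: ly} is a polynomial equation in $y$ of degree at most two, and everything hinges on whether the leading coefficient $f(x)$ vanishes.

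First I would dispose of the generic case $f(x)\neq 0$. Here \eqref{eq: ly} is a genuine quadratic in $y$ with discriminant $D(x):=g(x)^2-4f(x)h(x)$, and the number of roots $y\in\Fq$ is $2$, $1$, or $0$ according as $D(x)$ is a nonzero square, zero, or a non-square. In all three cases this count equals $1+\sigma(D(x))$, using the convention $\sigma(0)=0$. If I then pretended this formula held for \emph{every} $x$, summing would produce the main term $q+\sum_{x\in\Fq}\sigma(D(x))$, which is exactly the shape of \eqref{eq: ny}; the remaining task is to correct this naive count over the $n_f$ exceptional points where $f(x)=0$.

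The main work, and the only real obstacle, is the bookkeeping of these degenerate fibers, which I would organize by whether $g(x)$ and $h(x)$ also vanish. When $f(x)=0$ but $g(x)\neq 0$, equation \eqref{eq: ly} is linear with exactly one root, whereas the naive term reads $1+\sigma(g(x)^2)=2$, an overcount of $1$; when $f(x)=g(x)=0$ but $h(x)\neq 0$, there is no root against a naive value of $1+\sigma(0)=1$, again an overcount of $1$; and when $f(x)=g(x)=h(x)=0$, every $y\in\Fq$ is a root, giving $q$ against a naive value of $1$, an undercount of $q-1$. The delicate point is to verify that this trichotomy is exhaustive and that $D(x)$ collapses to $g(x)^2$ once $f(x)=0$, so each overcount is pinned down exactly. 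Collecting the corrections, the $n_f-n_{f,g,h}$ fibers of the first two types each contribute $-1$, while the $n_{f,g,h}$ fibers of the last type each contribute $q-1$, for a net correction of $-(n_f-n_{f,g,h})+(q-1)n_{f,g,h}=-n_f+q\,n_{f,g,h}$. Adding this to the main term $q+\sum_{x\in\Fq}\sigma(D(x))$ gives $q(1+n_{f,g,h})-n_f+\sum_{x\in\Fq}\sigma\big(g(x)^2-4f(x)h(x)\big)$, which is \eqref{eq: ny}.
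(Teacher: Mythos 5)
Your proof is correct and follows essentially the same route as the paper: a fiberwise count over $x$, the formula $1+\sigma\big(g(x)^2-4f(x)h(x)\big)$ in the generic case $f(x)\neq 0$, and an explicit case analysis of the degenerate fibers where $f(x)=0$. The only difference is organizational--you phrase the degenerate fibers as corrections to a naive count over all of $\Fq$, while the paper sums the generic and singular contributions separately--but the bookkeeping, including the observation that the discriminant collapses to $g(x)^2$ when $f(x)=0$, is identical.
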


\begin{proof}
For each $x\in \Fq$ which satisfies $f(x)\neq 0$, the quadratic equation \eqref{eq: ly} has $1+\sigma\big(g(x)^2-4f(x)h(x)\big)$ solutions $y\in \Fq$. The contribution to the solution count in this generic case is
\begin{align*}
\sum_{x\in \Fq: f(x)\neq 0}\Big(1+ \sigma\big(g(x)^2-4f(x)h(x)\big)\Big),
\end{align*}
which can be rewritten as 
\begin{align}\label{eq: ny1}
q-n_f- \sum_{x\in \Fq: f(x)=0}\sigma\big(g(x)^2\big)+\sum_{x\in \Fq} \sigma\big(g(x)^2-4f(x)h(x)\big).
\end{align}

Consider now the contribution coming from those $x\in \Fq$ which are zeros of $f(x)$. Then \eqref{eq: ly}  turns into the linear equation $g(x)y+h(x)=0$. This has one solution $y\in \Fq$ whenever $g(x)\neq 0$, respectively $q$ solutions whenever $g(x)=h(x)=0$. We can capture the solution count in this singular case by the formula
\begin{align}\label{eq: ny2}
q\cdot n_{f,g,h}+\sum_{x\in \Fq: f(x)=0}\sigma\big(g(x)^2\big).
\end{align}
Adding up the counts \eqref{eq: ny1} and \eqref{eq: ny2}, we obtain \eqref{eq: ny}.
\end{proof}

We apply the lemma to the two equations, $\alpha(x)u^2+\beta(x)u+\gamma(x)=0$ and $\delta_1(u)x^2+\delta_2(u)x+\delta_3(u)=0$. The solution count is the same since, we recall, they both represent the equation $F(u,x)=0$. We deduce the following.

\begin{thm}[Master formula]\label{thm: TF}
Let $\alpha(x)$, $\beta(x)$, $\gamma(x)$ and $\delta_1(u)$, $\delta_2(u)$, $\delta_3(u)$ be the polynomials given by \eqref{pols}. Then
\begin{align*}
q\cdot n_{\alpha,\beta,\gamma}&-n_{\alpha}+\sum_{x\in \Fq} \sigma\big(\beta(x)^2-4\alpha(x)\gamma(x)\big)=\\
& q\cdot n_{\delta_1,\delta_2,\delta_3}-n_{\delta_1}+\sum_{u\in \Fq} \sigma\big(\delta_2(u)^2-4\delta_1(u)\delta_3(u)\big).
\end{align*}
\end{thm}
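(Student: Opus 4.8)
The plan is to apply the preceding Lemma twice to the single quartic equation $F(u,x)=0$, exploiting the fact that $F$ can be read as a quadratic in either variable. The entire content of the theorem is that \emph{the number of solutions $(u,x)\in \Fq\times\Fq$ does not depend on which variable we single out}, so a double-count yields the identity.

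First I would fix the reading of $F$ as a quadratic in $u$, namely $F(u,x)=\alpha(x)u^2+\beta(x)u+\gamma(x)$ from \eqref{eq: Finu}. Here the roles in the Lemma are played by taking $f=\alpha$, $g=\beta$, $h=\gamma$, and by treating $x$ as the ``$x$'' of the Lemma and $u$ as the ``$y$''. Applying formula \eqref{eq: ny}, the number of solution pairs equals
\begin{align*}
q\cdot(1+n_{\alpha,\beta,\gamma})-n_{\alpha}+\sum_{x\in\Fq}\sigma\big(\beta(x)^2-4\alpha(x)\gamma(x)\big).
\end{align*}
Next I would read the same polynomial as a quadratic in $x$, namely $F(u,x)=\delta_1(u)x^2+\delta_2(u)x+\delta_3(u)$ from \eqref{eq: Finx}, now applying the Lemma with $f=\delta_1$, $g=\delta_2$, $h=\delta_3$, with $u$ in the role of the Lemma's ``$x$'' and $x$ in the role of its ``$y$''. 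This gives the solution count as
\begin{align*}
q\cdot(1+n_{\delta_1,\delta_2,\delta_3})-n_{\delta_1}+\sum_{u\in\Fq}\sigma\big(\delta_2(u)^2-4\delta_1(u)\delta_3(u)\big).
\end{align*}

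Finally, since both expressions count the same set $\{(u,x)\in\Fq\times\Fq:F(u,x)=0\}$, I would equate them. The common summand $q\cdot 1$ cancels from both sides, leaving exactly the asserted identity. The verification that both applications are legitimate rests only on the expansions \eqref{pols}, which exhibit $\alpha,\beta,\gamma$ and $\delta_1,\delta_2,\delta_3$ as genuine polynomials in $x$ and $u$ respectively, so the hypotheses of the Lemma are met in each case.

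I do not expect a serious obstacle here, as the argument is a clean double-count. The only point requiring care is bookkeeping: making sure that the Lemma is applied with the correct substitution of variables each time, so that the discriminants $\beta^2-4\alpha\gamma$ and $\delta_2^2-4\delta_1\delta_3$ and the zero-counts $n_\alpha,n_{\alpha,\beta,\gamma},n_{\delta_1},n_{\delta_1,\delta_2,\delta_3}$ land on the correct sides. One should also confirm that the additive constant $q$ appearing inside $q\cdot(1+n_{f,g,h})$ is indeed identical on both sides so that it cancels cleanly, which it does because the Lemma contributes the same structural $+q$ term regardless of the variable singled out.
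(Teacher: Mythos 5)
Your proof is correct and is essentially identical to the paper's: the paper likewise applies the counting lemma to the two readings $F(u,x)=\alpha(x)u^2+\beta(x)u+\gamma(x)$ and $F(u,x)=\delta_1(u)x^2+\delta_2(u)x+\delta_3(u)$, equates the two solution counts for $F(u,x)=0$, and cancels the common $q$ term. Your bookkeeping of which variable plays the role of the lemma's ``$y$'' in each application is exactly right, and no further verification is needed.
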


Theorem~\ref{thm: TF} reads as a general transformation formula relating two quadratic character sums whose polynomial argument is at most quartic. The heart of the matter in applying Theorem~\ref{thm: TF} is a suitable choice of a coefficient matrix.

In each one of the applications of Theorem~\ref{thm: TF} that follow, we will end up having $n_{\alpha,\beta,\gamma}=0$ and $n_{\delta_1,\delta_2,\delta_3}=0$. In some cases, this will be seen directly. In other cases, the following observation will prove useful.

\begin{lem}\label{lem: sf}
Assume that $\beta(x)^2-4\alpha(x)\gamma(x)$ is square-free. Then $n_{\alpha,\beta,\gamma}=0$ and $n_{\delta_1,\delta_2,\delta_3}=0$.
\end{lem}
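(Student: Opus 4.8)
The plan is to handle the two assertions separately, in both cases deriving a contradiction with the square-freeness of the row discriminant $D(x) := \beta(x)^2 - 4\alpha(x)\gamma(x)$, which is exactly the polynomial assumed square-free. Note that the zeros being counted in $n_{\alpha,\beta,\gamma}$ and $n_{\delta_1,\delta_2,\delta_3}$ lie in $\Fq$, so the relevant linear factors and scalars below are defined over $\Fq$ and the divisibility arguments are valid in $\Fq[x]$.

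For $n_{\alpha,\beta,\gamma}=0$ I would argue by contradiction. Suppose $x_0\in\Fq$ is a common zero of $\alpha$, $\beta$, $\gamma$. Then $(x-x_0)$ divides each of the three row polynomials, so $(x-x_0)^2$ divides $\beta(x)^2$ and also divides $4\alpha(x)\gamma(x)$, hence divides their difference $D(x)$. This contradicts square-freeness, so no such $x_0$ exists. This half is routine: the only input is that each row polynomial acquires the factor $(x-x_0)$.

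For $n_{\delta_1,\delta_2,\delta_3}=0$ the substantive idea is to extract a polynomial identity from a putative common zero. Suppose $u_0\in\Fq$ satisfies $\delta_1(u_0)=\delta_2(u_0)=\delta_3(u_0)=0$. Evaluating the two expansions of $F$ at $u=u_0$, the $x$-expansion \eqref{eq: Finx} gives $F(u_0,x)=\delta_1(u_0)x^2+\delta_2(u_0)x+\delta_3(u_0)=0$ identically in $x$, while the $u$-expansion \eqref{eq: Finu} gives $F(u_0,x)=u_0^2\alpha(x)+u_0\beta(x)+\gamma(x)$. Comparing these yields the polynomial identity $\gamma(x)=-u_0^2\alpha(x)-u_0\beta(x)$. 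Substituting this into $D$ and completing the square, I expect
\[
D(x)=\beta(x)^2+4u_0\,\alpha(x)\beta(x)+4u_0^2\,\alpha(x)^2=\big(\beta(x)+2u_0\alpha(x)\big)^2,
\]
so that $D$ is a perfect square. Since a square-free polynomial of positive degree cannot be a perfect square, this contradicts the hypothesis, and hence $n_{\delta_1,\delta_2,\delta_3}=0$.

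The main obstacle is the second claim, and specifically recognizing that a common zero of the down polynomials is the same as the line $u=u_0$ lying on the curve $F=0$, which forces the linear dependence $\gamma=-u_0^2\alpha-u_0\beta$ among the row polynomials and collapses the row discriminant into the perfect square $(\beta+2u_0\alpha)^2$; the rest is a one-line computation. The only point to keep in mind is the degenerate possibility that $D$ is a nonzero constant, which the square-free hypothesis is understood to exclude — in every intended application $D$ has degree three or four, so the perfect-square conclusion is a genuine contradiction.
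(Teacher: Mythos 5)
Your proposal is correct and follows essentially the same route as the paper's proof: the common factor $(x-x_0)$ argument for $n_{\alpha,\beta,\gamma}=0$, and for $n_{\delta_1,\delta_2,\delta_3}=0$ the comparison of the two expansions \eqref{eq: Finu} and \eqref{eq: Finx} at $u=u_0$ to get $\gamma(x)=-u_0^2\alpha(x)-u_0\beta(x)$, which collapses the discriminant into the perfect square $\big(\beta(x)+2u_0\alpha(x)\big)^2$. Your closing remark on the degenerate constant case is a sensible clarification that the paper leaves implicit, since in all applications $\beta(x)^2-4\alpha(x)\gamma(x)$ has positive degree.
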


\begin{proof}
If $n_{\alpha,\beta,\gamma}>0$ then $\alpha(x)$, $\beta(x)$, and $\gamma(x)$ have a common linear factor $x-x_0$. Whence $\beta(x)^2-4\alpha(x)\gamma(x)$ is divisible by $(x-x_0)^2$, in contradiction with the square-free hypothesis.

If $n_{\delta_1,\delta_2,\delta_3}>0$ then $\delta_1(u)$, $\delta_2(u)$, and $\delta_3(u)$ have a common zero $u_0$. From \eqref{eq: Finx} we see that $F(u_0,x)=0$, as a polynomial in $x$. In turn this implies, thanks to \eqref{eq: Finu}, that $-\gamma(x)=u_0\beta(x)+u_0^2\alpha(x)$ as polynomials. We then get $\beta(x)^2-4\alpha(x)\gamma(x)=(\beta(x)+2u_0\alpha(x))^2$, in contradiction with the square-free hypothesis. 
\end{proof}

Henceforth, there is no need to distinguish the two variables in Theorem~\ref{thm: TF}; the variable $u$ will be relabeled as $x$. 

\section{Applications}\label{A}
\subsection{A symmetric formula}
For the coefficient matrix 
\begin{align*}
\begin{pmatrix}
0 & 1 & 0\\
1 & 0 & a\\
0 & b &c
\end{pmatrix}
\end{align*}
the associated polynomials are
\begin{align*}
\begin{cases}
\alpha(x)=x\\
\beta(x)=x^2+a\\
\gamma(x)=bx+c\\
\end{cases},\qquad 
\begin{cases}
\delta_1(x)=x\\
\delta_2(x)=x^2+b\\
\delta_3(x)=ax+c\\
\end{cases}.
\end{align*}
We have $n_\alpha=1$ and $n_{\delta_1}=1$. 

Assuming that $(a,c)\neq (0,0)$ and $(b,c)\neq (0,0)$, we also have $n_{\alpha,\beta,\gamma}=0$ and $n_{\delta_1,\delta_2,\delta_3}=0$. Theorem~\ref{thm: TF} yields the pleasantly symmetric, quartic-to-quartic transformation formula
\begin{align}\label{eq: qq0}
\sum_{x\in \Fq} \sigma\big((x^2+a)^2-4x(bx+c)\big)=\sum_{x\in \Fq} \sigma\big((x^2+b)^2-4x(ax+c)\big).
\end{align}

The case $c\neq 0$ is Theorem~\ref{thm: qq}.

The case $c=0$ is not without interest. Now $a,b\neq 0$, and \eqref{eq: qq0} involves biquadratic arguments. By applying the descent formula \eqref{eq: biq} to both sides, we obtain the cubic-to-cubic transformation formula
\begin{align}
\sum_{x\in \Fq} \sigma(x)\sigma\big((x+a)^2-4bx\big)=\sum_{x\in \Fq} \sigma(x)\sigma\big((x+b)^2-4ax\big).
\end{align}

\subsection{A general descent formula}
Consider the coefficient matrix 
\begin{align*}
\begin{pmatrix}
0 & 0 & -1/4\\
1 & a_3/2 & 0\\
a_2-a_3^2/4 & a_1 &a_0
\end{pmatrix}.
\end{align*}
The associated polynomials are
\begin{align*}
\begin{cases}
\alpha(x)=-1/4\\
\beta(x)=x^2+(a_3/2)x\\
\gamma(x)=(a_2-a_3^2/4)x^2+a_1x+a_0\\
\end{cases},\qquad 
\begin{cases}
\delta_1(x)=x+(a_2-a_3^2/4)\\
\delta_2(x)=(a_3/2)x+a_1\\
\delta_3(x)=-x^2/4+a_0\\
\end{cases}.
\end{align*}
Visibly, $n_\alpha=0$ and $n_{\delta_1}=1$. We compute
\[
\begin{cases}
\beta(x)^2-4\alpha(x)\gamma(x)=x^4+a_3x^3+a_2x^2+a_1x+a_0,\\
\delta_2(x)^2-4\delta_1(x)\delta_3(x)=x^3+a_2x^2+(a_1a_3-4a_0)x+a_0(a_3^2-4a_2)+a_1^2.
\end{cases}
\]

Theorem~\ref{thm: TF}, with a bit of help from Lemma~\ref{lem: sf}, yields Theorem~\ref{thm: desc}.

\begin{rem}
In the case of a depressed quartic, Theorem~\ref{thm: desc} simplifies to the following descent formula: if $f(x)=x^4+ax^2+bx+c$ is square-free then
\begin{align}\label{eq: ddesc}
\sum_{x\in \Fq} \sigma(f(x))=-1+\sum_{x\in \Fq} \sigma(x^3+ax^2-4cx+b^2-4ac).
\end{align}

Conversely, Theorem~\ref{thm: desc} can be recovered by employing \eqref{eq: ddesc} as follows: given a quartic $f(x)=x^4+a_3x^3+a_2x^2+a_1x+a_0$, depress it by the variable shift $x:=x-a_3/4$; next, apply \eqref{eq: ddesc} to the resulting depressed quartic; finally, in the resulting cubic make the variable shift $x:=x+a_3^2/8$.
\end{rem}

\subsection{Descent for products of quadratics}
Consider the coefficient matrix 
\begin{align*}
\begin{pmatrix}
1 & b_1 & c_1\\
0 & 0 & 0\\
-1 & -b_2 &-c_2
\end{pmatrix}.
\end{align*}
The associated polynomials are
\begin{align*}
\begin{cases}
\alpha(x)=x^2+b_1x+c_1\\
\beta(x)=0\\
\gamma(x) =-(x^2+b_2x+c_2)\\
\end{cases},\qquad 
\begin{cases}
\delta_1(x)=x^2-1\\
\delta_2(x)=b_1x^2-b_2\\
\delta_3(x)=c_1x^2-c_2\\
\end{cases}.
\end{align*}
We then have 
\[\begin{cases}
\beta(x)^2-4\alpha(x)\beta(x) =4(x^2+b_1x+c_1)(x^2+b_2x+c_2)\\
\delta_2(x)^2-4\delta_1(x)\delta_3(x)=\Delta_1x^4+Bx^2+\Delta_2
\end{cases}\]
where $\Delta_1=b_1^2-4c_1$, $\Delta_2=b_2^2-4c_2$, $B=4(c_1+c_2)-2b_1b_2$. Note also that $n_\alpha=1+\sigma(\Delta_1)$ and $n_{\delta_1}=2$. 

We require that the quartic polynomial $f(x)=(x^2+b_1x+c_1)(x^2+b_2x+c_2)$ be square-free. Then $n_{\alpha,\beta,\gamma}=0$ and $n_{\delta_1,\delta_2,\delta_3}=0$ thanks to Lemma~\ref{lem: sf}. Thus far, Theorem~\ref{thm: TF} yields the identity
\begin{align}\label{eq: a31}
\sum_{x\in \Fq} \sigma(f(x))=-1+\sigma(\Delta_1)+\sum_{x\in \Fq} \sigma\big(\Delta_1x^4+Bx^2+\Delta_2\big).
\end{align}

Our assumption that $f(x)=(x^2+b_1x+c_1)(x^2+b_2x+c_2)$ be square-free amounts to $\Delta_1\neq 0$, $\Delta_2\neq 0$, and $B^2\neq 4\Delta_1\Delta_2$. Indeed, square-freeness of $f(x)$ means that its discriminant $\Delta(f)$ is non-zero. Now $\Delta(f)=\Delta_1\cdot \Delta_2\cdot \Pi^2$ where $\Pi=(z_1-z_2)(z_1-w_2)(w_1-z_2)(w_1-w_2)$. In the latter formula, $z_1,w_1$ are the roots of $x^2+b_1x+c_1$, and $z_2,w_2$ are the roots of $x^2+b_2x+c_2$, all in some extension of $\Fq$. We can evaluate $\Pi$ in terms of the coefficients $b_1$, $b_2$, $c_1$, $c_2$ as follows: 
\begin{align*}
\Pi&=(z_1^2+b_2z_1+c_2)(w_1^2+b_2w_1+c_2)\\
&=(c_1-c_2)^2-b_1b_2(c_1+c_2)+b_1^2c_2+b_2^2c_1.
\end{align*}
A further calculation reveals that $16\Pi=B^2-4\Delta_1\Delta_2$. To conclude, $\Delta(f)\neq 0$ amounts to $\Delta_1\neq 0$, $\Delta_2\neq 0$, and $B^2-4\Delta_1\Delta_2\neq 0$. 

As $\Delta_1\neq 0$ and $B^2\neq 4\Delta_1\Delta_2$, we infer from \eqref{eq: biq} that 
\begin{align}\label{eq: a32}
\sum_{x\in \Fq} \sigma\big(\Delta_1x^4+Bx^2+\Delta_2\big)=-\sigma(\Delta_1)+\sum_{x\in \Fq} \sigma\big(\Delta_1x^3+Bx^2+\Delta_2x\big).
\end{align}
Furthermore, the change of variable $x:=x/\Delta_1$ in the latter sum gives
\begin{align}\label{eq: a33}
\sum_{x\in \Fq} \sigma\big(\Delta_1x^3+Bx^2+\Delta_2x\big)=\sum_{x\in \Fq} \sigma\big(x^3+Bx^2+\Delta_1\Delta_2x\big).
\end{align}

Taken together, the identities \eqref{eq: a31}, \eqref{eq: a32}, \eqref{eq: a33} yield
\begin{align*}
\sum_{x\in \Fq} \sigma(f(x))=-1+\sum_{x\in \Fq} \sigma\big(x^3+Bx^2+\Delta_1\Delta_2x\big).
\end{align*}
We have thereby obtained the descent formula \eqref{eq: desc2intro} of Theorem~\ref{thm: desc2}.

\begin{rem}\label{rem}
As already mentioned, it is possible to obtain the descent formula \eqref{eq: desc2intro} from the general quartic-to-cubic formula \eqref{eq: gendescintro}. Here are the steps: (i) reduce the proof of \eqref{eq: desc2intro} to its depressed case by using a variable shift $x:=x+e$, (ii) use \eqref{eq: ddesc}, the depressed instance of \eqref{eq: gendescintro}, and (iii) use Jacobsthal's cubic-to-cubic formula \eqref{eq: jac} to conclude.

A change of variable $x:=x+e$ turns $f(x)=(x^2+b_1x+c_1)(x^2+b_2x+c_2)$ into a polynomial of the same form, $f^*(x)=(x^2+b^*_1x+c^*_1)(x^2+b^*_2x+c^*_2)$. Crucially, the parameters involved in the right-hand side of \eqref{eq: desc2intro} remain unchanged: $\Delta^*_1=\Delta_1$, $\Delta^*_2=\Delta_2$, and $B^*=B$. The first two are evident; the latter requires a computation, left to the reader. In view of this shifting invariance, we may choose a convenient shift, namely, $e=-(b_1+b_2)/4$, which depresses $f(x)$. It therefore suffices to obtain \eqref{eq: desc2intro} when $b_1+b_2=0$, in other words $b_2=-b_1=:b$.

In the case of a square-free polynomial of the form $f^*(x)=(x^2-bx+c_1)(x^2+bx+c_2)$, the quartic-to-cubic formula \eqref{eq: ddesc} says that
\begin{align*}
\sum_{x\in \Fq} \sigma(f^*(x))=-1+\sum_{x\in \Fq} \sigma(g(x))
\end{align*}
where $g(x)=x^3+(c_1+c_2-b^2)x^2-4c_1c_2x+b^2(c_1+c_2)^2-4c_1c_2(c_1+c_2)$.

We factor $g(x)=(x+c_1+c_2)\big(x^2-b^2x+b^2(c_1+c_2)-4c_1c_2\big)$. The change of variable $x:=x+b^2/2$ leads to
\begin{align*}
\sum_{x\in \Fq} \sigma(g(x))=\sum_{x\in \Fq} \sigma\bigg(\Big(x+\frac{B^*}{4}\Big)\Big(x^2-\frac{\Delta_1^*\Delta_2^*}{4}\Big)\bigg)
\end{align*}
where $B^*=4(c_1+c_2)+2b^2$, $\Delta_1^*=b^2-4c_1$, $\Delta_2^*=b^2-4c_2$ are the expected parameters. Next, we use the rescaling $x:=x/4$, followed by \eqref{eq: jac}, to get
\begin{align*}
\sum_{x\in \Fq} \sigma(g(x))&=\sum_{x\in \Fq} \sigma\big((x+B^*)(x^2-4\Delta_1^*\Delta_2^*)\big)\\
&=\sum_{x\in \Fq} \sigma\big(x(x^2+B^*x+\Delta_1^*\Delta_2^*)\big).
\end{align*}
Combining these steps, we conclude that
\begin{align*}
\sum_{x\in \Fq} \sigma(f^*(x))=-1+\sum_{x\in \Fq} \sigma\big(x(x^2+B^*x+\Delta_1^*\Delta_2^*)\big).
\end{align*}
\end{rem}

\section{Explicit examples}\label{E}
The parametric form of the transformation formulas discussed above confers them generality and applicability. In this final section we consider some explicit examples. We work over a prime field $\Fp$, where $p>3$. The quadratic character $\sigma$ is now written classically, as the Legendre symbol $(\cdot/p)$. 

For concrete applications, the most effective transformation formula is usually \eqref{eq: gendescintro} or its depressed case, \eqref{eq: ddesc}. Explicit evaluations are known only for sporadic families of cubics or quartics but, comparatively speaking, more is known for cubic arguments than for quartic arguments. So, if we are aiming for an explicit evaluation, then a good strategy is to promptly use a descent formula and turn a given quartic polynomial argument into a cubic. With a bit of luck, we just might land on a known evaluation--possibly after some simple shifting and rescaling.


\begin{ex}\label{ex1}
We evaluate the quadratic character sum
\begin{align}
S:=\sum_{x\in \Fp}\left(\frac{x^4+14x^3+24x^2+14x+1}{p}\right).
\end{align}
The quartic argument is square-free, as its discriminant is $-2^6\cdot 3^9\neq 0$. Theorem~\ref{thm: desc} gives
\begin{align*}
S=-1+\sum_{x\in \Fp}\left(\frac{x^3+24x^2+192x+296}{p}\right).
\end{align*}
The cubic argument can be written as $(x+8)^3-6^3$. A shift $x:=x-8$, and then a scaling $x:=-6x$, give
\begin{align*}
S=-1+\sum_{x\in \Fp}\left(\frac{x^3-6^3}{p}\right)=-1+\left(\frac{-6}{p}\right)\sum_{x\in \Fp}\left(\frac{x^3+1}{p}\right).
\end{align*}
The latter sum is a well-known cubic Jacobsthal sum. It evaluates as follows:
\begin{align*}
\sum_{x\in \Fp} \left(\frac{x^3+1}{p}\right)=
\begin{cases}
2A_3 & \textrm{ if } p\equiv 1 \textrm{ mod } 3,\\
0 & \textrm{ if } p\equiv 2 \textrm{ mod } 3,
\end{cases}
\end{align*}
where, in the case $p\equiv 1$ mod $3$, we write $p=A_3^2+3B_3^2$ with the convention that $A_3\equiv -1$ mod $3$. See, for instance, \cite[Chap.3]{N}.

As $(-3/p)=1$ when $p\equiv 1$ mod $3$, we conclude that
\begin{align}
S=\begin{cases}
-1+ (2/p)\cdot 2A_3 & \textrm{ if } p\equiv 1 \textrm{ mod } 3,\\
-1 & \textrm{ if } p\equiv 2 \textrm{ mod } 3.
\end{cases}
\end{align}
\end{ex}

\begin{ex}
We evaluate the quadratic character sum
\begin{align}
S:=\sum_{x\in \Fp} \left(\frac{x^4+8x^3+24x^2-44x+16}{p}\right).
\end{align}
We first depress the quartic argument by the change of variable $x:=x-2$. We get the simpler quartic $x^4-76x+152$, whose discriminant is $-2^8\cdot 19^3$. For $p=19$, we easily get $S=\sum_{x\in \Fp} (x^4/p)=p-1$.

Assume $p\neq 19$. We may then use \eqref{eq: ddesc} to get
\[S=-1+\sum_{x\in \Fp} \left(\frac{x^3-608 x+5776}{p}\right).\]
The latter quadratic character sum can be evaluated by the following formula due to Rishi, Parnami, and Rajwade \cite{RPR}: for $\lambda\in \Fp^*$, we have
\begin{align*}
\sum_{x\in \Fp} \left(\frac{x^3-2^3\cdot 19\cdot  \lambda^2 x+2\cdot 19^2\cdot \lambda^3}{p}\right)=
\begin{cases}
(2\lambda /p) \cdot (A/19)\cdot A & \textrm{ if } (p/19)=1,\\ 
0  & \textrm{ if } (p/19)=-1,
\end{cases}
\end{align*}
where, in the case $(p/19)=1$, we write $4p=A^2+19B^2$.

Taking $\lambda=2$, we conclude that
\begin{align}
S=\begin{cases}
-1+(A/19)\cdot A & \textrm{ if } (p/19)=1,\\ 
-1  & \textrm{ if } (p/19)=-1.
\end{cases}
\end{align}
\end{ex}

\begin{ex} \label{ex3}
In a very recent preprint \cite{Z}, Wenpeng Zhang pointed out that two different computations of the fourth moment associated to certain generalized Kloosterman sums imply the identity
\[
\left(\frac{-1}{p}\right)\sum_{x\in \Fp^*} \left(\frac{x^3+x^2+x}{p}\right)-\sum_{x\in \Fp^*} \left(\frac{(x^2+1)(x^2+4x+1)}{p}\right)=2.
\]
Zhang asked for a direct proof of the identity. We give one below. 

After including the terms corresponding to $x=0$, the above identity can be rewritten as 
\begin{align}\label{eq: wz}
\sum_{x\in \Fp} \left(\frac{(x^2+1)(x^2+4x+1)}{p}\right)=-1+\left(\frac{-1}{p}\right) \sum_{x\in \Fp} \left(\frac{x^3+x^2+x}{p}\right).
\end{align}
We have two quartic-to-cubic descent formulas at our disposal, Theorem~\ref{thm: desc2} and Theorem~\ref{thm: desc}. Using Theorem~\ref{thm: desc2} is a natural choice, and \eqref{eq: wz} can indeed be obtained in this way. But Theorem~\ref{thm: desc} turns out to offer the quicker road. 

Note that $(x^2+1)(x^2+4x+1)=(x+1)^4-4x^2$. The change of variable $x:=x-1$ gives
\begin{align*}
\sum_{x\in \Fp}\left(\frac{(x^2+1)(x^2+4x+1)}{p}\right)= \sum_{x\in \Fp}\left(\frac{x^4-4(x-1)^2}{p}\right).
\end{align*}
The quartic argument is square-free, as it has discriminant $-2^{12}\cdot 3\neq 0$. We apply \eqref{eq: ddesc}--that is, the depressed case of Theorem~\ref{thm: desc}--to the latter sum:
\begin{align*}
\sum_{x\in \Fp} \left(\frac{x^4-4(x-1)^2}{p}\right)=-1+\sum_{x\in \Fp} \left(\frac{x^3-4x^2+16x}{p}\right).
\end{align*}
Finally, the rescaling $x:=-4x$ gives
\begin{align*}
\sum_{x\in \Fp} \left(\frac{x^3-4x^2+16x}{p}\right)= \left(\frac{-1}{p}\right)\sum_{x\in \Fp} \left(\frac{x^3+x^2+x}{p}\right).
\end{align*}
We deduce \eqref{eq: wz} by combining the above identities.
\end{ex}

The quartic polynomials of Example~\ref{ex1} and Example~\ref{ex3} are palindromic--that is to say, of the form $x^4+ax^3+bx^2+ax+1$. A general descent formula for palindromic quartics is discussed in \cite[Ex.5.18]{N}. We take this opportunity to point out an error in \cite[Ex.5.18]{N}: in the case $b=6$, the answer should be $-1+\varphi_2(16-a^2)$ instead of $-1+\varphi_2(1)$. The root error occurs earlier, in equation (138) of \cite[Thm.5.16]{N}: in the case $s=0$, the answer should be $-1+\varphi_2(cr)$ instead of $-1+\varphi_2(r)$.

For further results in the calculus of quadratic character sums, we refer the reader to \cite[Chap.5]{N}.


\end{document}